 \tikzset{mynode/.style={draw,circle,inner sep=1pt,outer sep=0pt}}
\newtheorem{teo}{Theorem}[section]
\newtheorem{cor}[teo]{Corollary}
\newtheorem{lem}[teo]{Lemma}
\newtheorem{defi}[teo]{Definition}
\newtheorem{example}[teo]{Example}
\newtheorem{remark}[teo]{Remark}
\def\T{\mathbb{T}}
\dedicatory{}
\begin{document}

\title{IDEALLY EXACT CATEGORIES}

\author{George Janelidze}
\address[George Janelidze]{Department of Mathematics and Applied Mathematics, University of Cape Town, Rondebosch 7700, South Africa}
\thanks{}
\email{george.janelidze@uct.ac.za}

\keywords{Ideally exact category, semi-abelian category, Barr exact category, Bourn protomodular category, essentially nullary monad, cartesian monad, ideal}

\subjclass[2010]{18E13, 18C15, 18E08, 08B99}

\begin{abstract}
The purpose of this paper is to initiate a development of a new non-pointed counterpart of semi-abelian categorical algebra. We are making, however, only the first step in it by giving equivalent definitions of what we call \emph{ideally exact categories}, and showing that these categories admit a description of quotient objects by means of intrinsically defined ideals, in spite of being non-pointed. As a tool we involve a new notion of \emph{essentially nullary monad}, and show that Bourn protomodularity condition makes cartesian monads essentially nullary. All semi-abelian categories, all non-trivial Bourn protomodular varieties of universal algebras, and all cotoposes are ideally exact.    
\end{abstract}

\date{\today}

\maketitle

\section{Introduction}

Consider the forgetful functor
$$U:\mathsf{Rings}\to\mathsf{Rings}_*$$
from the category $\mathsf{Rings}$ of unital rings (=``rings with 1'') with $f(1)=1$ for every morphism $f$ to the category of not-necessarily-unital rings. Using this functor and the fact that the category $\mathsf{Rings}_*$ is semi-abelian in the sense of \cite{[JMT2002]}, one can establish various exactness properties of $\mathsf{Rings}$. Moreover, this can be done intrinsically inside $\mathsf{Rings}$, since the functor $U$ is, up to canonical category equivalences, the same as the pullback functor
$$!^*:(\mathsf{Rings}\downarrow1)\to(\mathsf{Rings}\downarrow0),$$
where:
\begin{itemize}
	\item $0$ and $1$ are, respectively, an initial and a terminal object in $\mathsf{Rings}$. That is, we can say that $0$ is the ring $\mathbb{Z}$ of integers and $1$ is the trivial ring (making strange looking equalities $0=\mathbb{Z}$ and $1=\{0\}=\{1\}$). 
	\item $!=!_1=!^0$ is the unique morphism $0\to1$. Hence, for the pullback-along-$!$ functor $!^*$ we can write $$!^*(X)=(0\times_1X,\mathrm{proj}_1)=(0\times X,\mathrm{proj}_1)=(\mathbb{Z}\times X,\mathrm{proj}_1).$$
\end{itemize}
We are going to propose suitable conditions on an abstract category $\mathcal{A}$, under which the similar pullback functor, written as $$\mathcal{A}\to(\mathcal{A}\downarrow0),$$ is a `good' monadic functor with semi-abelian $(\mathcal{A}\downarrow0)$. Categories satisfying these conditions were called \emph{nearly semi-abelian} in \cite{[J2023]} and \cite{[J2023a]}, but now I call them \emph{ideally exact} -- thinking of them as Barr exact categories with \emph{ideals} that can be used as in the case of rings. The study of ideally exact categories is supposed to become a new non-pointed counterpart of semi-abelian categorical algebra. This paper makes first steps in it as follows:

Section 2 is devoted to an interaction of Bourn protomodularity, monadic descent, cartesian monad, and a new notion of essentially nullary monad, which will be used in the next sections. Apart from that it reformulates (slightly more generally) a useful theorem from \cite{[LMS2022]} and its proof. Section 3 is devoted to equivalent definitions of an ideally exact category, and the main examples. The proofs there look easy since they are based on strong previous results on various authors. Section 4 presents the story of quotients via ideals using the notion of ``essentially nullary''. Section 5 is just a simple remark on action representability.

I am grateful to G. Metere for showing me his paper \cite{[LMS2022]} with S. Lapenta and L. Spada after my talk \cite{[J2023]}, and for his simple but important remark, after my talk \cite{[J2023a]}, mentioned here in Example 3.6(c).

\section{Monadic descent and essentially nullary monads}

A monad $T=(T,\eta,\mu)$ on a category $\mathscr{X}$ with finite coproducts is said to be nullary if, for each object $X$ in $\mathscr{X}$, the morphism $$[T(!_X),\eta_X]:T(0)+X\to T(X)$$ is an isomorphism. Dually, a comonad $H=(H,\varepsilon,\delta)$ on a category $\mathscr{Y}$ with finite products is said to be nullary if, for each object $Y$ in $\mathscr{Y}$, the morphism $$\langle H(!^Y),\varepsilon_Y\rangle:H(Y)\to H(1)\times Y$$ is an isomorphisms. Both of these definitions were used in \cite{[CJ1995]}, and now we introduce:

\begin{defi}
	\emph{A monad $T=(T,\eta,\mu)$ on a category $\mathscr{X}$ with finite coproducts will be called} essentially nullary, \emph{if the morphism $[T(!_X),\eta_X]$ is a strong epimorphism for each object $X$ in $\mathscr{X}$.}
\end{defi}

\begin{example}
	\emph{If $\mathscr{X}$ and $\mathscr{A}$ are varieties of universal algebras, and $\mathscr{A}$ is obtained from $\mathscr{X}$ by adding a set of nullary operations and a set of identities, then the monad on $\mathscr{X}$ associated with the free-forgetful adjunction $\mathscr{X}\to\mathscr{A}$ is essentially nullary. If, however, only nullary operations are added and no new identities, then it is nullary.}
\end{example}

\begin{teo}
	If $\mathscr{X}$ is a pointed Bourn protomodular category with finite coproducts and $T=(T,\eta,\mu)$ is a monad on $\mathscr{X}$ with cartesian $\eta$, then $T$ is essentially nullary.
\end{teo}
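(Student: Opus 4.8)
The plan is to use pointedness to realize $[T(!_X),\eta_X]$ as the comparison map of a split epimorphism, and then to invoke the characterization of pointed protomodularity by jointly strongly epic kernel--section pairs. Throughout I use that a (pointed) protomodular category is finitely complete, so kernels exist and strong and extremal epimorphisms coincide.

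First I would record the formal setup. Since $\mathscr{X}$ is pointed, $0=1$ is a zero object, so we have $!^X:X\to 0$ and $!_X:0\to X$ with $!^X\circ!_X=1_0$. Applying the functor $T$ yields $T(!^X)\circ T(!_X)=1_{T(0)}$, so that $T(!^X):T(X)\to T(0)$ is a split epimorphism with section $T(!_X):T(0)\to T(X)$; in other words $(T(!^X),T(!_X))$ is a point over $T(0)$.

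The key step is to identify the kernel of $T(!^X)$, and this is exactly where the cartesianness of $\eta$ enters. I would consider the naturality square of $\eta$ at the morphism $!^X:X\to 0$, which has corners $X$, $T(X)$, $0$, $T(0)$ and horizontal edges $\eta_X$ and $\eta_0$; cartesianness of $\eta$ makes this square a pullback. Since $0$ is initial, $\eta_0:0\to T(0)$ is the unique morphism out of the zero object, i.e. the zero morphism, so this pullback is precisely the pullback of $T(!^X)$ along the zero map $0\to T(0)$, which by definition computes $\ker(T(!^X))$. Hence $\ker(T(!^X))=(X,\eta_X)$ with kernel inclusion $\eta_X:X\to T(X)$.

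Finally I would invoke protomodularity. In a pointed protomodular category, for any split epimorphism the kernel inclusion and the section are jointly strongly epic; applied to the point $(T(!^X),T(!_X))$ with kernel $\eta_X$, this says that $[\eta_X,T(!_X)]:X+T(0)\to T(X)$ is a strong epimorphism. Precomposing with the coproduct symmetry isomorphism $T(0)+X\cong X+T(0)$, which does not affect being a strong epimorphism, shows that $[T(!_X),\eta_X]:T(0)+X\to T(X)$ is a strong epimorphism, so $T$ is essentially nullary. The only genuinely nontrivial point is the kernel computation of the third paragraph, together with citing the precise jointly-strongly-epic form of protomodularity; the rest is formal manipulation.
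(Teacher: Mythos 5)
Your proof is correct and takes essentially the same route as the paper's: the paper applies Lemma 3.1.22 of Borceux--Bourn to the naturality square of $\eta$ at $!^X$ (a pullback by cartesianness of $\eta$), which in the pointed setting is precisely your observation that the kernel $\eta_X$ of the split epimorphism $T(!^X)$ and its section $T(!_X)$ are jointly strongly epic by protomodularity.
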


\begin{proof}
	For an object $X$ in $\mathscr{X}$, consider the diagram
	$$\xymatrix{X\ar@<0.3ex>[d]^{!^X}\ar[rr]^{\eta_X}&&T(X)\ar@<0.3ex>[d]^{T(!^X)}\\0\ar@<0.3ex>[u]^{!_X}\ar[rr]_{\eta_0}&&T(0)\ar@<0.3ex>[u]^{T(!_X)}}$$
	which is a special case of the diagram in Condition 2 of Lemma 3.1.22 in \cite{[BB2004]}. According to that lemma, $[T(!_X),\eta_X]$ is a strong epimorphism in $\mathscr{A}$.
\end{proof}

Let $\mathscr{A}$ be a category with pullbacks, and $p:E\to B$ a morphism in $\mathscr{A}$. By the \textit{adjunction associated to} $p$ we will mean the adjunction $$(p_!,p^*,\eta^p,\varepsilon^p):(\mathscr{A}\downarrow E)\to(\mathscr{A}\downarrow B)$$ in which \begin{center}$p_!(D,\delta)=(D,p\delta),\,\,\,p^*(A,\alpha)=(E\times_BA,\mathrm{proj}_1),$\\\vspace{1mm}$\eta^p_{(D,\delta)}=\langle\delta,1_D\rangle:D\to E\times_BD,\,\,\,\varepsilon^p_{(A,\alpha)}=\mathrm{proj}_2:E\times_BA\to A$\vspace{1mm}\end{center} (here and below we write morphisms in comma categories as morphisms of underlying objects). Let $T=(T,\eta,\mu)$ and $H=(H,\varepsilon,\delta)$ be the monad on $(\mathscr{A}\downarrow E)$ and the comonad on $(\mathscr{A}\downarrow B)$, respectively, determined by this adjunction. Assuming that $\mathscr{A}$ also has finite coproducts, we will have\vspace{1mm} \begin{center}$[T(!_{(D,\delta)}),\eta^p_{(D,\delta)}]=[E\times_B!_D,\langle\delta,1_D\rangle]:(E\times_B0)+D\to E\times_BD,$\\\vspace{1mm}$\langle H(!^{(A,\alpha)}),\varepsilon^p_{(A,\alpha)}\rangle=\langle \mathrm{proj}_1,\mathrm{proj}_2\rangle=1_{E\times_BA}:E\times_BA\to E\times_BA.$\end{center}

\begin{teo}
	The adjunction $(p_!,p^*,\eta^p,\varepsilon^p):(\mathscr{A}\downarrow E)\to(\mathscr{A}\downarrow B)$ has the following properties:
	\begin{itemize}
		\item [(a)] all the above-mentioned natural transformations are cartesian;
		\item [(b)] $p_!$, $T$, and $H$ preserves (existing) connected limits and coequalizers of effective equivalence relations whose coequalizers are pullback stable regular epimorphisms; 
		\item [(c)] $p_!$ is comonadic and the comonad $H$ is nullary;
		\item [(d)] $p^*$ reflects isomorphisms if and only if $p$ is a pullback stable strong epimorphism;
		\item [(e)] if $\mathscr{A}$ is Barr exact, then $p^*$ is monadic if and only if $p$ is a regular epimorphism;
		\item [(f)] if $\mathscr{A}$ is Bourn protomodular, then $T$ is essentially nullary.  
	\end{itemize}
\end{teo}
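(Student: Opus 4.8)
The plan is to read (a)--(e) as a (mild generalization of the) standard descent package attached to a morphism, proving them by comma-category and Beck-theoretic arguments, and to reserve the real effort for (f). For (a) I would first show $\eta^p$ and $\varepsilon^p$ are cartesian directly from the pullback-pasting lemma: a naturality square of $\varepsilon^p=\mathrm{proj}_2$ at $g\colon(A,\alpha)\to(A',\alpha')$ exhibits $E\times_BA$ as the pullback of $g$ along $\mathrm{proj}_2\colon E\times_BA'\to A'$, and dually for $\eta^p$. Since $\mu=p^*\varepsilon^pp_!$ and $\delta=p_!\eta^pp^*$ are whiskerings of these by the pullback-preserving functors $p^*$ (a right adjoint) and $p_!$ (which merely reassigns the structure morphism), they stay cartesian. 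For (b), $p_!$ commutes with the forgetful functors to $\mathscr{A}$ that create connected limits and all colimits, so $p_!$ preserves connected limits and, as a left adjoint, all colimits; $p^*$ preserves all limits, and it preserves each listed coequalizer because it sends the kernel pair $R$ of the coequalizer $q$ to the kernel pair of $p^*(q)$, so that $p^*(q)$ is again a regular epimorphism (by stability) with kernel pair $p^*(R)$, hence the coequalizer of $p^*(R)$. Composing gives the same for $T=p^*p_!$ and $H=p_!p^*$.

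For (c) I would use the canonical isomorphism $(\mathscr{A}\downarrow E)\cong(\mathscr{A}\downarrow B)\downarrow(E,p)$ identifying $p_!$ with the domain functor of a slice of $(\mathscr{A}\downarrow B)$; such domain functors are comonadic as soon as the ambient category has the relevant pullbacks, which yields comonadicity of $p_!$, while nullarity of $H$ is read off the computation displayed just before the theorem, where $\langle H(!^{(A,\alpha)}),\varepsilon^p_{(A,\alpha)}\rangle=1_{E\times_BA}$. For (d), stability of $p$ makes every $\mathrm{proj}_2\colon E\times_BX\to X$ a strong epimorphism; if $p^*(g)$ is an isomorphism, naturality of $\mathrm{proj}_2$ then lets one cancel these strong epimorphisms on kernel pairs and force any monomorphism through which $g$ factors to be invertible, so $g$ is at once monic and a strong epimorphism, hence an isomorphism. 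Conversely, if $p^*$ reflects isomorphisms and $p$ factors through a monomorphism $m$, then $p^*(m)$ is a split epimorphism and a monomorphism, hence invertible, hence $m$ is invertible, so $p$ is a strong epimorphism; stability follows because for a pullback $q$ of $p$ along $b$ the functor $q^*$ has the same underlying action as $p^*b_!$, so it too reflects isomorphisms and the same factorization argument makes $q$ a strong epimorphism.

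Part (e) I would settle with Beck's monadicity theorem. If $p$ is a regular epimorphism then in a Barr exact category it is a pullback-stable strong epimorphism, so $p^*$ reflects isomorphisms by (d), and exactness supplies the remaining Beck condition that $p^*$ creates coequalizers of $p^*$-split pairs --- this is exactly the statement that regular epimorphisms are effective descent morphisms in a Barr exact category, the descent input reformulated from \cite{[LMS2022]}. Conversely a monadic $p^*$ reflects isomorphisms, so $p$ is a pullback-stable strong epimorphism by (d), which in a Barr exact category means a regular epimorphism.

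Part (f) is where the genuine difficulty lies and is the step I expect to be the main obstacle. Although $(\mathscr{A}\downarrow E)$ is protomodular, it is \emph{not} pointed, so Theorem 2.1 cannot be applied to it: there is no morphism $X\to 0$ from which to build the split-epimorphism square used there. The remedy is to work directly in $\mathscr{A}$ and choose a different split epimorphism. It suffices to prove that the underlying morphism $[E\times_B!_D,\langle\delta,1_D\rangle]\colon(E\times_B0)+D\to E\times_BD$ is a strong epimorphism in $\mathscr{A}$, and here $\mathrm{proj}_2\colon E\times_BD\to D$ is a split epimorphism whose section is precisely $\langle\delta,1_D\rangle=\eta^p_{(D,\delta)}$. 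Pulling this split epimorphism back along the unique morphism $!_D\colon0\to D$ from the initial object gives, by pullback pasting (so that $(E\times_BD)\times_D0\cong E\times_B0$), a cartesian square of split epimorphisms whose upper edge is $E\times_B!_D$. To this morphism of points Condition 2 of Lemma 3.1.22 of \cite{[BB2004]} applies, since it is a statement about protomodular, not necessarily pointed, categories, and it yields that the section $\langle\delta,1_D\rangle$ and the map $E\times_B!_D$ are jointly strongly epic, that is, that $[E\times_B!_D,\langle\delta,1_D\rangle]$ is a strong epimorphism; hence $T$ is essentially nullary. The one thing to watch is that the split epimorphism used now lives over $D$ rather than over the initial object, which is exactly what compensates for the loss of pointedness.
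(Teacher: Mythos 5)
Your proposal is correct and follows essentially the same route as the paper: for part (f), your pullback of the split epimorphism $\mathrm{proj}_2\colon E\times_BD\to D$ along $!_D\colon 0\to D$, the identification $(E\times_BD)\times_D0\cong E\times_B0$, and the appeal to Condition 2 of Lemma 3.1.22 of \cite{[BB2004]} reproduce the paper's argument exactly, including the final passage from a strong epimorphism in $\mathscr{A}$ to one in $(\mathscr{A}\downarrow E)$. For (a)--(e) the paper simply cites known results, and your sketches are faithful expansions of those standard descent-theoretic arguments (the only slip is attributional: the effective-descent fact you use in (e) comes from \cite{[JST2004]} and ultimately \cite{[BP1979]}, not from \cite{[LMS2022]}).
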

\begin{proof}
	(a)-(c) are known and it is easy to check them diarectly; in particular, the first sentence of Introduction of \cite{[CHJ2014]} implicitly refers (mentioning Grothendieck descent) to the fact that the monad $T$ is cartesian. (d) and (e) are known in descent theory (For (d), see e.g. Proposition 1.3 in \cite{[JT1991]}; cf. Remark 2.5 below. For (e), see e.g. Theorem 3.7(d) in \cite{[JST2004]}; the same result is mentioned in \cite{[ST1992]} and in \cite{[JT1994]}, and in fact it goes back to Corollary (2.4) in \cite{[BP1979]}). To prove (f), given $(D,\delta)$ in $(\mathscr{A}\downarrow E)$, consider the diagram
	$$\xymatrix{E\times_B0\ar@<0.3ex>[d]^{\mathrm{proj}_2}\ar[rr]^{E\times_B!_D}&&E\times_BD\ar@<0.3ex>[d]^{\mathrm{proj}_2}\\0\ar@<0.3ex>[u]^{\langle!_E,1_0\rangle}\ar[rr]_{!_D}&&D\ar@<0.3ex>[u]^{\langle\delta,1_D\rangle}}$$
	which is a special case of the diagram in Condition 2 of Lemma 3.1.22 in \cite{[BB2004]}. According to that lemma, $[T(!_{(D,\delta)}),\eta^p_{(D,\delta)}]=[E\times_B!_D,\langle\delta,1_D\rangle]$ is a strong epimorphism in $\mathscr{A}$. But being a strong epimorphism in $\mathscr{A}$ it is also a strong epimorphism in $(\mathscr{A}\downarrow E)$. 
\end{proof}

\begin{remark}
	\emph{(a) In the situation of Theorem 2.4, as implicitly indicated in its proof, the category of $T$-algebras is the same as the category $\mathrm{Des}(p)$ of descent data for $p$ (in the monadic form, see e.g. \cite{[JT1994]} or \cite{[JST2004]}; this goes beck to an unpublished observation of J. M. Beck and to \cite{[BR1970]}). In particular, since $T$ is essentially nullary, it is easy to see that $\mathrm{Des}(p)$ becomes a full subcategory of $((E\times_B0,\mathrm{proj}_1)\downarrow(\mathscr{A}\downarrow E))$ canonically. Let us leave to the reader to, generalizing this, find an essentially-nullary counterpart of Theorem 1.1 of \cite{[CJ1995]}.\\
	\indent(b) The category $(\mathscr{A}\downarrow E)$ is pointed if and only if $E$ is an initial object in $\mathscr{A}$; if it is the case, then Theorem 2.4(e) follows from Theorem 2.3, although the square diagrams used in the proofs of these two theorems are not the same. On the other hand, one could make a non-pointed version of Theorem 2.3 to deduce Theorem 2.4(e) from it also for non-initial $E$. The proof would involve the diagram} $$\xymatrix{T^2(0)\ar@<0.3ex>[d]^{\mu_0}\ar[rr]^{T^2(!_X)}&&T^2(X)\ar@<0.3ex>[d]^{\mu_X}\\T(0)\ar@<0.3ex>[u]^{T(\eta_0)}\ar[rr]_{T(!_X)}&&T(X)\ar@<0.3ex>[u]^{T(\eta_X)}}$$
    \emph{One would then have to require $T$ to preserve pullbacks and reflect isomorphisms, and $\mu$ to be cartesian (instead of $\eta$).}
\end{remark}

\begin{teo}
	Let $(F,U,\eta,\varepsilon):\mathscr{X}\to\mathscr{A}$ be an adjunction between categories with pullbacks, in which $\mathscr{X}$ is pointed and Bourn protomodular, $U$ reflects isomorphisms, and $\eta$ is cartesian. Then the functor $F^0:\mathscr{X}\to(\mathscr{A}\downarrow0)$ defined by $F^0(X)=(F(X),F(!^X))$ (where we put $F(0)=0$ for simplicity) is a category equivalence making the diagram
	$$\xymatrix{(\mathscr{A}\downarrow0)\ar[r]^-{p_!}&\mathscr{A}\\\mathscr{X}\ar[u]^{F^0}\ar[ur]_F}$$
	commute. 
\end{teo}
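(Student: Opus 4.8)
The commutativity of the triangle is immediate: $p_!$ sends $(A,\alpha)$ to its domain $A$, so $p_!F^0(X)=F(X)$. Write $T=UF$ with unit $\eta$; since a left adjoint preserves the initial object, $F(0)=0$ is initial in $\mathscr A$, and the unique morphism $0\to T(0)=UF(0)$ out of the zero object is $\eta_0$. I would first exhibit a right adjoint $G$ of $F^0$ by letting $G(A,\alpha)$ be the pullback of $U(\alpha):U(A)\to T(0)$ along $\eta_0:0\to T(0)$ (it exists as $\mathscr X$ has pullbacks), with projections $\pi:G(A,\alpha)\to U(A)$ and $\,!:G(A,\alpha)\to0$. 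Transposing along $(F,U)$ and using that $!^X$ is the unique map to the terminal object $0$, a morphism $F^0(X)\to(A,\alpha)$ --- that is, a $g:F(X)\to A$ with $\alpha g=F(!^X)$ --- corresponds to $\bar g=U(g)\eta_X:X\to U(A)$ with $U(\alpha)\bar g=\eta_0\,!^X$, and by the universal property of the pullback these are exactly the morphisms $X\to G(A,\alpha)$. Hence $F^0\dashv G$; taking $(A,\alpha)=F^0(Y)$ shows $F^0$ is fully faithful, so the unit $X\to GF^0(X)$ is an isomorphism (equivalently, it is the comparison into the pullback of $T(!^X)$ along $\eta_0$, invertible because $\eta$ is cartesian).

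It then remains to show that the counit $\theta_{(A,\alpha)}:F^0G(A,\alpha)\to(A,\alpha)$ is an isomorphism, for then $F^0$ is an adjoint equivalence. Put $X=G(A,\alpha)$; then $\theta=\theta_{(A,\alpha)}:F(X)\to A$ is the transpose of $\pi$, so $U(\theta)\,\eta_X=\pi$, and a transposition check gives $\alpha\theta=F(!^X)$, i.e. $\theta$ lies over $0$. Since $U$ reflects isomorphisms, it suffices to prove that $U(\theta):T(X)\to U(A)$ is an isomorphism in $\mathscr X$.

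This is where pointed protomodularity of $\mathscr X$ enters, through the split short five lemma. As $0$ is initial in $\mathscr A$, the morphism $\alpha$ is split (with section the unique $0\to A$), hence so is $U(\alpha):U(A)\to T(0)$; likewise $T(!^X):T(X)\to T(0)$ is split by $T(\iota_X)$, where $\iota_X:0\to X$ is the unique map out of the initial object. Thus $U(A)$ and $T(X)$ are points over the common base $T(0)$, and $U(\theta)$ is a morphism of points: it respects the projections since $\alpha\theta=F(!^X)$, and the sections since $\theta F(\iota_X)=\iota_A$ ($0$ being initial in $\mathscr A$). The kernel of $T(X)\to T(0)$ --- the pullback of $T(!^X)$ along the basepoint $\eta_0:0\to T(0)$ --- is canonically $X$ with inclusion $\eta_X$, exactly because $\eta$ is cartesian; the kernel of $U(A)\to T(0)$ is $X$ with inclusion $\pi$ by the definition of $G$. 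The endomorphism $u$ of $X$ induced by $U(\theta)$ satisfies $\pi u=U(\theta)\eta_X=\pi$, and $\pi$ is a monomorphism (a pullback of the split monomorphism $\eta_0$), so $u=1_X$. The split short five lemma --- equivalently, Bourn's conservativity of the pullback functor along $\eta_0:0\to T(0)$ between categories of points --- then forces $U(\theta)$, hence $\theta$, to be an isomorphism, and $F^0$ is an equivalence.

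The one genuinely load-bearing step is the last one, and within it the identification of both kernels over $T(0)$ with $X$: this is precisely where cartesianness of $\eta$ is used in an essential way, and where one must check that $\eta_0$ really is the unique morphism out of the zero object, so that these pullbacks are bona fide kernels in the pointed sense. I do not expect to need finite coproducts, so Theorem 2.3 and the essential-nullarity/strong-epimorphism route of Lemma 3.1.22 of \cite{[BB2004]} are not invoked here: protomodularity is used only via the split short five lemma for points over $T(0)$.
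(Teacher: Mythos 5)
Your proof is correct and follows essentially the same route as the paper: the right adjoint $G$ you construct is exactly the paper's $U^0(A,\alpha)=0\times_{UF(0)}U(A)=\mathrm{Ker}(U(\alpha))$ from the induced adjunction on comma categories, cartesianness of $\eta$ gives the invertible unit in both arguments, and your split-short-five-lemma step for the counit is just an unfolding of the paper's observation that $U^0$ reflects isomorphisms (conservativity of $U$ composed with conservativity of the kernel functor on points over $T(0)$, which is protomodularity). The only cosmetic difference is that you compute the induced map on kernels to be the identity directly rather than invoking the triangle identity $U^0\varepsilon^0\cdot\eta^0U^0=1$.
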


\begin{proof}
	The functor $F^0$ appears in the adjunction $$(F^0,U^0,\eta^0,\varepsilon^0):(\mathscr{X}\downarrow0)\to(\mathscr{A}\downarrow F(0))$$ induced by the adjunction $(F,U,\eta,\varepsilon)$ (it a special case of a construction used e.g. in \cite{[J1991]} and in \cite{[T1991]}; see also references in \cite{[J1991]}). Here we identify $(\mathscr{X}\downarrow0)$ with $\mathscr{X}$ (since $\mathscr{X}$ is pointed) and $(\mathscr{A}\downarrow F(0))$ with $(\mathscr{A}\downarrow0)$, and we can write:
	\begin{center}$U^0(A,\alpha)=0\times_{UF(0)}U(A)=0\times_{U(0)}U(A)=\mathrm{Ker}(U(\alpha)),$\\\vspace{1mm}$\eta^0_X=\langle!^X,\eta^X\rangle:X\to0\times_{UF(0)}UF(X)$,\end{center}
	and then we observe:
	\begin{itemize}
		\item [(i)] Since every morphism $A\to0$ in $\mathscr{A}$ is a split epimorphism with a unique splitting, the category $(\mathscr{A}\downarrow F(0))=(\mathscr{A}\downarrow0)$ can be identifies with category $\mathrm{Pt}_{\mathscr{A}}(0)$ of points in $\mathscr{A}$ over $0$ in the sense of D. Bourn. 
		\item [(ii)] Since $U$ reflects isomorphisms, $\mathscr{X}$ is Bourn protomodular, and $U^0(A,\alpha)=\mathrm{Ker}(U(\alpha))$ for all $(A,\alpha)$ in $(\mathscr{A}\downarrow0)$, $U^0$ reflects isomorphisms by (i).
		\item [(iii)] Since $\eta$ is cartesian, $\eta^0:1_{\mathscr{X}}\to U^0F^0$ is an isomorphism.
	\end{itemize}
	Now, the fact that $F^0$ is category equivalence follows from (ii) and (iii), while the equality $p_!F^0=F$ is obvious.
\end{proof} 

\begin{remark}
	\emph{(a) Theorem 2.6 and its proof are closely related to the results and arguments of Section 2 of \cite{[T1991]}.\\ \indent(b) Since $\mathscr{X}$ is pointed (no matter whether it is Bourn protomodular or not), $\eta$ above is cartesian if and only if, for every object $X$ in $\mathscr{X}$, the diagram $$\xymatrix{X\ar[d]_{!^X}\ar[rr]^-{\eta_X}&&UF(X)\ar[d]^{UF(!^X)}\\0\ar[rr]_-{\eta_0=!_{UF(0)}}&&UF(0)}$$
	is a pullback, or, equivalently, $\eta_X$ is a kernel of $UF(!^X)$. This means that Theorem 2.6 repeats Theorem 3.10 of \cite{[LMS2022]}, except that the proof is shorter and the category $\mathscr{X}$ (denoted by $\mathsf{A}$ in \cite{[LMS2022]}) is only required to be Bourn protomodular, not necessarily homological.}
\end{remark}

\section{Equivalence theorem, definition, and examples}

\begin{teo}
	The following conditions on a category $\mathscr{A}$ are equivalent:
	\begin{itemize}
		\item [(a)] $\mathscr{A}$ is Barr exact and Bourn protomodular, $\mathscr{A}$ has finite coproducts, and the morphism $!_1=\,\,!^0:0\to1$ in $\mathscr{A}$ is a regular epimorphism;
		\item [(b)] $\mathscr{A}$ is Barr exact and has finite coproducts, and there exists a monadic functor $\mathscr{A}\to\mathscr{X}$ with semi-abelian $\mathscr{X}$;
		\item [(c)] there exists a monadic functor $\mathscr{A}\to\mathscr{X}$ with semi-abelian $\mathscr{X}$ such that the underlying functor of the corresponding monad preserves regular epimorphisms and kernel pairs.
	\end{itemize}
\end{teo}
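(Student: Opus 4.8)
The plan is to prove the two implications (a)$\Rightarrow$(b) and (a)$\Rightarrow$(c) together with their converses (b)$\Rightarrow$(a) and (c)$\Rightarrow$(a), which yields all equivalences at once. For the forward implications I would take $\mathscr{X}=(\mathscr{A}\downarrow0)$ and apply Theorem 2.4 to $p=\,!_1=!^0:0\to1$, identifying $(\mathscr{A}\downarrow B)=(\mathscr{A}\downarrow1)$ with $\mathscr{A}$, so that $p^*:\mathscr{A}\to(\mathscr{A}\downarrow0)$ is the pullback functor with left adjoint $p_!$. First I would check that, under (a), the slice $(\mathscr{A}\downarrow0)$ is semi-abelian: it is pointed because $0$ is initial (so $(0,1_0)$ is a zero object), and it inherits Barr exactness, Bourn protomodularity and finite coproducts from $\mathscr{A}$. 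Since $\mathscr{A}$ is Barr exact and $!_1$ is a regular epimorphism, Theorem 2.4(e) makes $p^*$ monadic, which gives (b). For (a)$\Rightarrow$(c) I would reuse this monadic $p^*$, whose monad is $T=p^*p_!$; by Theorem 2.4(b) this functor preserves connected limits, hence kernel pairs, and preserves coequalizers of effective equivalence relations, which in the Barr exact category $(\mathscr{A}\downarrow0)$ are precisely the regular epimorphisms.

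For both converses I would begin from a monadic $U:\mathscr{A}\to\mathscr{X}$ with left adjoint $F$ and semi-abelian (hence pointed, Barr exact, protomodular, finitely cocomplete) $\mathscr{X}$, and record two facts that need no extra hypotheses. Since $U$ is monadic it is conservative and creates finite limits, so it is conservative and left exact; as Bourn protomodularity is reflected by conservative left-exact functors (both pullbacks and isomorphisms, hence conservativity of the change-of-base functors between the categories of points, are preserved and reflected by $U$), $\mathscr{A}$ is protomodular and finitely complete. Moreover $!_1:0\to1$ is a regular epimorphism: $U$ preserves the terminal object and $\mathscr{X}$ is pointed, so $U(1)=1_{\mathscr{X}}$ is initial in $\mathscr{X}$, while $F$ preserves initial objects, whence $FU(1)=F(0_{\mathscr{X}})=0$; thus $!_1$ is the counit $\varepsilon_1:FU(1)\to1$, and counits in a category of algebras are always regular epimorphisms (each algebra is the canonical coequalizer of free ones).

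Granting (b), this already completes (b)$\Rightarrow$(a), since Barr exactness and finite coproducts are assumed and the two facts above supply protomodularity and the regular epimorphism $!_1$. Granting (c), it remains to produce Barr exactness and finite coproducts for $\mathscr{A}\simeq\mathscr{X}^{T}$. For exactness I would use that $T$ preserves regular epimorphisms and kernel pairs: this lets one construct in $\mathscr{X}^{T}$ the coequalizer of any kernel pair by lifting the corresponding coequalizer of $\mathscr{X}$ along $U$, show that a morphism of $\mathscr{X}^{T}$ is a regular epimorphism exactly when $U$ sends it to one, and deduce that regular epimorphisms are pullback stable and that every equivalence relation is effective. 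For finite coproducts I would exploit that $\mathscr{A}$ is now known to be exact and protomodular, hence an exact Mal'cev category, so that every reflexive relation is an effective equivalence relation and reflexive coequalizers exist; as $F$ preserves the coproducts of $\mathscr{X}$, the coproduct of algebras $A,B$ is then realized as the reflexive coequalizer of $F(TUA+TUB)\rightrightarrows F(UA+UB)$, which now exists, and the initial object is $F(0_{\mathscr{X}})$. This establishes (a).

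I expect the main obstacle to be the step under (c) that upgrades ``the monad preserves regular epimorphisms and kernel pairs'' to genuine Barr exactness of $\mathscr{X}^{T}$ and to the existence of binary coproducts: the exactness argument requires careful lifting and reflection of regular epimorphisms and effective equivalence relations through $U$, and the coproduct construction becomes available only after both exactness and protomodularity have been secured, so that the exact Mal'cev property (reflexive relations are equivalence relations) can be invoked. By contrast, the forward implications and the two facts used in the converse are comparatively routine once Theorem 2.4 and the description of $!_1$ as a counit are in hand.
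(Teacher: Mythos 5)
Your proposal is correct and, in substance, follows the same route as the paper: the forward direction is exactly the paper's appeal to Theorem 2.4(e) applied to $p=\,!^0:0\to1$ together with the observation that $(\mathscr{A}\downarrow0)$ is pointed, Barr exact, protomodular and finitely cocomplete, hence semi-abelian, and that the resulting monad preserves regular epimorphisms and kernel pairs; the recovery of finite coproducts from (c) is the same appeal to Linton's reflexive-coequalizer construction (the paper cites Corollary 2 of Section 1 of \cite{[L1969]}, which is precisely the $F(TUA+TUB)\rightrightarrows F(UA+UB)$ coequalizer you describe, available once exactness and the Mal'cev property give reflexive coequalizers); and protomodularity of $\mathscr{A}$ is obtained in both cases by reflecting it along the conservative, limit-preserving monadic $U$. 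The one genuinely different local step is your proof that $!_1:0\to1$ is a regular epimorphism: you identify $FU(1)\cong F(0_{\mathscr{X}})\cong 0$ and recognize $!_1$ as the counit $\varepsilon_1$, which is a regular epimorphism by the canonical presentation of an algebra as a coequalizer of free ones, whereas the paper factors $0\to1$ as a regular epimorphism followed by a monomorphism and shows the monomorphism is inverted by $U$ (its $U$-image being a monomorphism with initial codomain). Your argument is a clean alternative that does not invoke the image factorization, hence not even Barr exactness of $\mathscr{A}$ for this particular step; the paper's argument is equally short given that exactness is already assumed in (b). The remaining organizational difference --- you prove (a)$\Leftrightarrow$(b) and (a)$\Leftrightarrow$(c) separately rather than the cycle (a)$\Rightarrow$(c)$\Rightarrow$(b)$\Rightarrow$(a) --- is immaterial, and your more detailed sketch of why $\mathscr{X}^T$ is Barr exact when $T$ preserves regular epimorphisms and kernel pairs fills in a step the paper treats as known.
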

\begin{proof}
	(a)$\Rightarrow$(c): Since $0\to 1$ is a regular epimorphism in a Barr exact category, the pullback functor $\mathscr{A}\approx(\mathscr{A}\downarrow1)\to(\mathscr{A}\downarrow0)$ along it is monadic (see Theorem 2.4(e)). Since $\mathscr{A}$ is Barr exact, Bourn protomodular, and has finite coproducts, $\mathscr{X}=(\mathscr{A}\downarrow0)$ has the same properties. Since the category $\mathscr{X}=(\mathscr{A}\downarrow0)$ is (obviously) pointed, this makes it semi-abelian. Since pullback functors (together with their left adjoints) in a Barr exact category determine monads that preserve regular epimorphisms and kernel pairs, this completes the proof.
	
	(c)$\Rightarrow$(b): From (c), we immediately conclude that $\mathscr{A}$ is Barr exact and Bourn protomodular. It then also follows that $\mathscr{A}$ has all reflexive coequalizers. Since $\mathscr{A}$ has rexlexive coequalizers and $\mathscr{X}$ has finite coproducts, $\mathscr{A}$ also has finite coproducts, which is a `finite' version of Corollary 2 in Section 1 of \cite{[L1969]}.
	
	(b)$\Rightarrow$(a): Let $U:\mathscr{A}\to\mathscr{X}$ be a monadic functor with semi-abelian $\mathscr{X}$. Again, Bourn-protomodularity of $\mathscr{X}$ implies Bourn-protomodula-rity of $\mathscr{A}$. It remains to prove that $0\to1$ is a regular epimorphism. Since $\mathscr{A}$ is Barr exact, we can present $0\to1$ as a composite $me$, where $m$ is a monomorphism and $e$ a regular epimorphism. Since $0=1$ in $\mathscr{X}$ and $U$ preserves monomorphisms, $U(m)$ is a monomorphism with initial codomain. This makes $U(m)$ an isomorphism. Then, since $U$ reflects isomorphisms, this makes $m$ an isomorphism. Therefore $0\to1$ is a regular epimorphism.
\end{proof}

\begin{defi}
    \emph{We will say that a category $\mathscr{A}$ is} ideally exact \emph{if it satisfies the equivalent conditions of Theorem 3.1.}
\end{defi}

In fact, as the next theorem shows, more equivalent conditions could be added, e.g. by replacing \emph{finite coproducts} with \emph{finite colimits} in conditions 3.1(a) and 3.1(b), and/or by requiring the monad involved in conditions 3.1(b) and 3.1(c) to be cartesian, or essentially nullary.

\begin{teo}
	Let $\mathscr{A}$ be an ideally exact category. Then:
	\begin{itemize}
		\item [(a)] $\mathscr{A}$ has finite colimits;
		\item [(b)] there exists a monadic functor $\mathscr{A}\to\mathscr{X}$ with semi-abelian $\mathscr{X}$ such that the underlying functor of the corresponding monad preserves regular epimorphisms and kernel pairs, and the corresponding monad is cartesian and essentially nullary.
	\end{itemize}
\end{teo}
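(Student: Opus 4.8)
The plan is to produce the monadic functor explicitly as the pullback functor along $!_1:0\to1$, read off the properties listed in (b) directly from Section~2, and then obtain the finite cocompleteness (a) by upgrading reflexive coequalizers to arbitrary finite colimits.

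I would begin with (b), which is essentially bookkeeping. Since $\mathscr{A}$ is ideally exact, condition 3.1(a) tells us that $\mathscr{A}$ is Barr exact and Bourn protomodular, has finite coproducts, and that $!_1=\,\,!^0:0\to1$ is a regular epimorphism. By Theorem 2.4(e) the pullback functor $\mathscr{A}\approx(\mathscr{A}\downarrow1)\to(\mathscr{A}\downarrow0)$ along $!_1$ is then monadic, and I take $\mathscr{X}=(\mathscr{A}\downarrow0)$. As in the proof of Theorem 3.1, $\mathscr{X}$ inherits Barr exactness, Bourn protomodularity and finite coproducts from $\mathscr{A}$ and is pointed, hence semi-abelian; and the induced monad preserves regular epimorphisms and kernel pairs because it is determined by a pullback functor in a Barr exact category. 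Crucially, this monad is exactly the monad $T$ of Theorem 2.4 for the choice $E=0$, $B=1$, $p=!_1$. Therefore Theorem 2.4(a) (together with the preservation of connected limits in 2.4(b)) shows that $T$ is cartesian, and Theorem 2.4(f), applicable precisely because $\mathscr{A}$ is Bourn protomodular, shows that $T$ is essentially nullary. This settles (b).

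For (a), I would rely on the same presentation $\mathscr{A}\approx\mathscr{X}^{T}$ and first secure reflexive coequalizers, then deduce all finite colimits. Granting reflexive coequalizers, the reduction is routine: given an arbitrary parallel pair $f,g:A\rightrightarrows B$, the pair $[f,1_B],[g,1_B]:A+B\rightrightarrows B$ is reflexive (the coproduct injection $B\to A+B$ is a common section) and a morphism out of $B$ coequalizes it if and only if it coequalizes $f,g$; thus $\mathscr{A}$ acquires all coequalizers, and together with the initial object $0$ and the binary coproducts from 3.1(a) this yields all finite colimits.

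The main obstacle is the existence of reflexive coequalizers in $\mathscr{A}$ (the fact already invoked in the proof of Theorem 3.1). I would establish it by showing that $T$ preserves reflexive coequalizers and then lifting them to $\mathscr{X}^{T}\approx\mathscr{A}$. In the Barr exact Mal'cev category $\mathscr{X}$ every reflexive coequalizer is a coequalizer of an effective equivalence relation: factoring $\langle f,g\rangle:R\to X\times X$ of a reflexive pair as a regular epimorphism followed by a monomorphism produces a reflexive relation, which in a Mal'cev category is an equivalence relation and in a Barr exact category is effective, with the same coequalizer as the original pair. Such coequalizers are preserved by $T$ by Theorem 2.4(b), and since $T$ also preserves the regular epimorphism arising in the factorization, $T$ preserves the reflexive coequalizer itself. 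The forgetful functor of a monad preserving reflexive coequalizers creates them, so $\mathscr{X}^{T}\approx\mathscr{A}$ indeed has reflexive coequalizers, completing (a).
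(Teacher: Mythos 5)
Your proposal is correct, and part (b) coincides with the paper's argument: both take the pullback functor along $!_1:0\to1$, invoke Theorem 2.4(e) for monadicity, inherit semi-abelianness of $(\mathscr{A}\downarrow0)$ as in the proof of Theorem 3.1, and read off cartesianness and essential nullarity from Theorem 2.4(a),(b),(f). Part (a) is where you genuinely diverge. The paper obtains reflexive coequalizers in $\mathscr{A}$ directly from the fact that $\mathscr{A}$ itself is Barr exact and Bourn protomodular (hence Mal'cev), via the same image-factorization argument you use -- but applied to $\mathscr{A}$ rather than to $\mathscr{X}$ -- and then cites a finite version of Linton's Corollary 2 to upgrade to all finite colimits of the category of algebras. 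Your route is longer at the first step (you prove that $T$ preserves reflexive coequalizers using Theorem 2.4(b) and then lift them from $\mathscr{X}$ to $\mathscr{X}^T\approx\mathscr{A}$ by the standard creation result for monads preserving such coequalizers, which implicitly also uses that $T^2$ preserves them -- harmless, since preservation is closed under composition), and shorter at the second: instead of citing Linton you derive arbitrary coequalizers from reflexive ones via the pair $[f,1_B],[g,1_B]:A+B\rightrightarrows B$ and combine with the finite coproducts already guaranteed by condition 3.1(a). The net effect is a more self-contained proof of (a) at the cost of an unnecessary detour: since $\mathscr{A}$ is assumed ideally exact, you could have run your Mal'cev argument in $\mathscr{A}$ directly and skipped the preservation-and-lifting step entirely.
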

\begin{proof}
	(a): Use the same arguments as in the proof of 3.1(c)$\Rightarrow$(b) and note that the finite version of Corollary 2 in Section 1 of \cite{[L1969]} gives all finite colimits (not just finite coproducts).
	(b): Use the same monadic functor as in the proof of 3.1(a)$\Rightarrow$(c), and note that the monad involved is cartesian by and essentially nullary by Theorem 2.4.
\end{proof}
\begin{remark}
	\emph{In the situation of Theorem 3.3(b), ``cartesian'' implies ``essentially nullary'' by Theorem 2.3. The same implication can be obtained as in the proof of Theorem 2.4.}
\end{remark}
\begin{example}
	\emph{(a) Every semi-abelian category is ideally exact, and an ideally exact category is semi-abelian if and only if it is pointed.\\
	\indent (b) If $\mathscr{X}$ is a semiabelian category, and $X$ is an object in it, the comma category $(X\downarrow\mathscr{X})$ is ideally exact. Furthermore, from Theorem 3.3 we know that every ideally exact category is equivalent to a category of the form $\mathscr{X}^T$, where $\mathscr{X}$ is semi-abelian and $T$ is essentially nullary -- and Theorem 1.1 in \cite{[CJ1995]} tells us that $T$ is nullary if and only if $\mathscr{X}^T$ is canonically equivalent to some $(X\downarrow\mathscr{X})$.\\
	\indent (c) In contrast to this, for a semi-abelian $\mathscr{X}$, the comma category $(\mathscr{X}\downarrow X)$ is ideally exact if and only if $X=0$ (that is, $X$ is an initial=terminal object in $\mathscr{A}$).}
\end{example}
\begin{example}
	\emph{A non-trivial variety of universal algebras is ideally exact if and only if it is Bourn protomodular. Such varieties are characterized \cite{[BJ2003]} by having, for some natural $n$, nullary terms $e_1,\ldots,e_n$, binary terms $t_1,\ldots,t_n$, and $(n+1)$-ary term $t$ with $t(x,t_1(x,y),\ldots,t_n(x,y))=y$ and $t_k(x,x)=e_k$ for all $k=1,\ldots,n$. Consider some special cases, where $\mathscr{A}$ is a variety with terms $e_1,\ldots,e_n,t_1,\ldots,t_n,t$ satisfying the conditions above:
		\begin{itemize}
			\item [(a)] In the pointed (=semi-abelian) case we have $e_1=\ldots=e_n.$. But just this equalities do not make $\mathscr{A}$ pointed of course. In fact the existence of terms satisfying the conditions above together with the equalities $e_1=\ldots=e_n$ defines \emph{BIT speciale} varieties in the sense of \cite{[U1973]}, which are the same as \emph{classically ideal determined} varieties in the sense of \cite{[U1994]}. A notable example of such a variety with $n=2$ is the variety of Heyting algebras, as follows from a result of \cite{[J2004]}.
			\item [(b)] The case of $n=1$, goes back at least to \cite{[S1960]}, and includes all varieties of groups (and even, say, loops) with arbitrary additional algeraic structure; in particular, its pointed cases include all varieties of groups with multiple operators in the sense of \cite{[H1956]}. Notable non-pointed cases are those of varieties of unital rings and of unital algebras (associative or not) over any commutative unital ring.
			\item [(c)] If $n=0$, then $t(x)=y$, which is satisfied if and only if all algebras in the given variety have at most one element. There are two such varieties, up to category equivalence: one of them contains the empty algebra and the other does not. Both of them are Bourn protomodular, but, as first noticed by G. Metere, only the second one is ideally exact. More generally, a lattice considered as a category, although is it always Barr exact and Bourn protomodular, is ideally exact if and only if it has only one element.     
		\end{itemize}}
	\end{example}
	
	\begin{example}
		\emph{Every cotopos is ideally exact. Indeed, as proved in \cite{[B2004]}, if $\mathscr{E}$ is a topos, then $\mathscr{E}^{\mathrm{op}}$ is Barr exact and Bourn protomodular; it also has finite colimits, and $0\to 1$ is regular epimorphism there since the dual property (obviously) holds in $\mathscr{E}$.} 
	\end{example}
	
	\begin{remark}
		\emph{As Definition 3.2 in fact suggests, an ideally exact category $\mathscr{A}$ naturally appears in a monadic adjunction $\mathscr{X}\to\mathscr{A}$ with semi-abelian $\mathscr{X}$, and it is good to know whether or not such an adjunction is associated to the morphism $0\to1$ (up to an equivalence). This question has a simple answer: as follows from Theorems 2.4 and 2.6, it is the case if and only if the unit of the adjunction is cartesian.}  
	\end{remark}
	
	\section{Quotients and ideals}
	
	\begin{lem}
		Let $T=(T,\eta,\mu)$ be an essentially nullary monad on a category $\mathscr{X}$ with finite limits and finite coproducts. Let $(X_0,h_0)$ and $(X_1,h_1)$ be $T$-algebras, and $$\xymatrix{X_0\ar[r]^m&X\ar[r]^n&X_1}$$ be a composable pair of morphisms in $\mathscr{X}$, in which $n$ is a monomorphism and $nm$ is a $T$-algebra morphism $(X_0,h_0)\to(X_1,h_1)$. Then $X$ admits a (unique) $T$-algebra structure $h:T(X)\to X$ for which $m:(X_0,h_0)\to(X,h)$ and $n:(X,h)\to(X_1,h_1)$ are morphisms of $T$-algebras. 
	\end{lem}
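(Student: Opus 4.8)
The uniqueness is the easy half, so I would dispose of it first. Since $n$ is a monomorphism, any $T$-algebra structure $h$ for which $n$ is a morphism must satisfy $n\circ h=h_1\circ T(n)$, and this equation pins down $h$ completely. Hence the whole problem reduces to \emph{producing} an $h$ with $n\circ h=h_1\circ T(n)$ and then verifying that it also makes $m$ a morphism and satisfies the algebra axioms. The plan is to obtain this $h$ as a diagonal fill-in, which is precisely where essential nullarity of $T$ enters: the morphism $[T(!_X),\eta_X]$ is a strong epimorphism, so it has the (unique) diagonal lifting property against the monomorphism $n$.

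Concretely, I would write $a_0=h_0\circ T(!_{X_0}):T(0)\to X_0$ and $a_1=h_1\circ T(!_{X_1}):T(0)\to X_1$ for the ``nullary actions'' of the two given algebras, and set $a=m\circ a_0:T(0)\to X$. Using that $nm$ is an algebra morphism together with the initial-object identity $nm\circ!_{X_0}=!_{X_1}$, one checks that $n\circ a=a_1$. I would then consider the square
$$\xymatrix{T(0)+X\ar[rr]^-{[a,1_X]}\ar[d]_-{[T(!_X),\eta_X]}&&X\ar[d]^-{n}\\T(X)\ar[rr]_-{h_1\circ T(n)}&&X_1}$$
whose left edge is a strong epimorphism (this is exactly essential nullarity) and whose right edge is the monomorphism $n$. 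Its commutativity is the computational heart of the argument: both composites equal $[a_1,n]:T(0)+X\to X_1$, which follows from $n\circ a=a_1$, naturality of $\eta$, the identity $T(n)\circ T(!_X)=T(!_{X_1})$, and the unit law $h_1\circ\eta_{X_1}=1_{X_1}$. The strong-epi/mono fill-in then yields a unique $h:T(X)\to X$ with $h\circ[T(!_X),\eta_X]=[a,1_X]$ and $n\circ h=h_1\circ T(n)$.

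It then remains to confirm that this $h$ does everything required, and here each verification is a routine cancellation against the monomorphism $n$. The identity $n\circ h=h_1\circ T(n)$ says precisely that $n$ is an algebra morphism. That $m$ is one, i.e. $m\circ h_0=h\circ T(m)$, follows by composing with $n$: both $n\circ m\circ h_0$ and $n\circ h\circ T(m)$ reduce to $h_1\circ T(nm)$ -- the first because $nm$ is an algebra morphism, the second from $n\circ h=h_1\circ T(n)$ -- and then $n$ is cancelled. The unit law $h\circ\eta_X=1_X$ drops out by precomposing $h\circ[T(!_X),\eta_X]=[a,1_X]$ with the coproduct injection $X\to T(0)+X$. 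Finally the associativity law $h\circ\mu_X=h\circ T(h)$ is again checked after composing with $n$: using $n\circ h=h_1\circ T(n)$, naturality of $\mu$, and the associativity of $h_1$, both sides collapse to $h_1\circ\mu_{X_1}\circ T^2(n)$, so cancelling $n$ finishes the argument.

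The one genuinely delicate point is the choice of the filler datum $[a,1_X]$ and the verification that the displayed square commutes; everything after the fill-in is the same mono-cancellation pattern applied three times. In particular, essential nullarity is used exactly once and exactly where it is needed -- to guarantee that $[T(!_X),\eta_X]$ lifts against $n$ -- while the hypothesis that $n$ is a monomorphism is what transports all the algebra-axiom checks from $X$ back to the known algebra $(X_1,h_1)$.
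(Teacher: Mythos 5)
Your proof is correct and follows essentially the same route as the paper: the paper's argument also obtains $h$ as the diagonal fill-in of the strong epimorphism $[T(!_X),\eta_X]$ against the monomorphism $n$ in the very square you display (with top edge $[mh_0T(!_{X_0}),1_X]=[a,1_X]$), the remaining verifications being absorbed into one large commutative diagram. You have simply written out explicitly the commutativity check and the mono-cancellation arguments for the unit and associativity laws that the paper leaves to the reader.
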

	\begin{proof}
		Consider the diagram:
		$$\xymatrix{T(0)+X_1\ar[rr]^-{[T(!_{X_1}),\eta_{X_1}]}&&T(X_1)\ar[r]^-{h_1}&X_1\\&T(X)\ar[ur]|{T(n)}\ar@{.>}[drr]^h\\T(0)+X\ar[uu]^{T(0)+n}\ar[ur]|{[T(!_{X}),\eta_{X}]}\ar[rrr]_{[mh_0T(!_{X_0}),1_X]}&&&X\ar[uu]_n\\T(0)+X_0\ar[u]^{T(0)+m}\ar[rr]_-{[T(!_{X_0}),\eta_{X_0}]}&&T(X_0)\ar[r]_-{h_0}&X_0\ar[u]_m}$$
		It is easy to check that its solid arrows form a commutative diagram, and then the desired morphism $h:T(X)\to X$, making the whole diagram commute, is determined by the orthogonality of the strong epimorphism $[T(!_{X}),\eta_{X}]$ to the monomorphism $n$.
	\end{proof}	
	\begin{cor}
		Let $\mathscr{X}$ and $T$ be as in Lemma 4.1, and let $(X,h)$ be a $T$-algebra. Every reflexive relation $R=$ 
		$$\xymatrix{R\ar@<1.2ex>[r]^-{r_1}\ar@<-0.6ex>[r]_-{r_2}&X\ar@<-0.3ex>[l]|-e}$$
		on $X$ is homomorphic, that is, it admits a (unique) $T$-algebra structure on $R$ making $r_1$, $r_2$, and $e$ morphisms in $\mathscr{X}$. 
	\end{cor}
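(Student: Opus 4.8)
The plan is to reduce the statement to Lemma 4.1 by embedding the reflexive relation $R$ into the square $X\times X$, equipped with its canonical product $T$-algebra structure. First I would recall the standard fact that, since $\mathscr{X}$ has finite limits, the category $\mathscr{X}^T$ of $T$-algebras has finite limits and the forgetful functor creates them. In particular $X\times X$ underlies a $T$-algebra $(X,h)\times(X,h)$, whose structure map $\bar h:T(X\times X)\to X\times X$ is characterized by the requirement that the two projections $\mathrm{proj}_1,\mathrm{proj}_2:(X\times X,\bar h)\to(X,h)$ be $T$-algebra morphisms; consequently the diagonal $\Delta=\langle 1_X,1_X\rangle:(X,h)\to(X\times X,\bar h)$ is a $T$-algebra morphism as well, by the universal property of the product in $\mathscr{X}^T$.

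Next I would assemble the data needed to invoke Lemma 4.1. Because $R$ is a relation on $X$, the morphism $n=\langle r_1,r_2\rangle:R\to X\times X$ is a monomorphism. Because $R$ is reflexive, the section $e$ satisfies $r_1e=r_2e=1_X$, hence $ne=\langle r_1e,r_2e\rangle=\Delta$. I would therefore take $(X_0,h_0)=(X,h)$, $(X_1,h_1)=(X\times X,\bar h)$, $m=e:X_0\to R$, and $n=\langle r_1,r_2\rangle:R\to X_1$. Then $nm=\Delta$ is a $T$-algebra morphism by the first paragraph, and $n$ is a monomorphism, so Lemma 4.1 applies verbatim and furnishes a unique $T$-algebra structure $h_R:T(R)\to R$ for which both $e$ and $\langle r_1,r_2\rangle$ are morphisms of $T$-algebras.

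It then remains to deduce that $r_1$, $r_2$, and $e$ are all morphisms with respect to $h_R$, which is immediate: $e$ is a morphism directly, while $r_i=\mathrm{proj}_i\circ\langle r_1,r_2\rangle$ is a composite of $T$-algebra morphisms for $i=1,2$. This gives the existence half. For uniqueness I would argue that any $T$-algebra structure on $R$ making $r_1$, $r_2$, and $e$ morphisms automatically makes $\langle r_1,r_2\rangle$ a morphism (again by the universal property of the product in $\mathscr{X}^T$), so it satisfies the hypotheses of Lemma 4.1; since $\langle r_1,r_2\rangle$ is a monomorphism, such a structure is forced and therefore coincides with $h_R$.

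I do not anticipate a genuine obstacle: essential nullarity is packaged entirely inside Lemma 4.1 and is not re-used here, so the only real content is the correct choice of ambient algebra $X\times X$ together with the observation that \emph{reflexivity} is exactly what allows the diagonal $\Delta$ to factor through $n$, thereby supplying the morphism $m=e$ that the lemma demands. The single point requiring care is the routine verification that finite limits in $\mathscr{X}^T$ are created by the forgetful functor, so that the projections and the diagonal are genuinely $T$-algebra morphisms.
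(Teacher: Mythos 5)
Your proposal is correct and is exactly the paper's argument: the paper also applies Lemma 4.1 to $X\xrightarrow{e}R\xrightarrow{\langle r_1,r_2\rangle}X\times X$, using that the diagonal $\langle r_1,r_2\rangle e=\langle 1_X,1_X\rangle$ is a $T$-algebra morphism. You merely spell out the routine details (creation of finite limits by the forgetful functor, recovery of $r_1,r_2$ via the projections, and uniqueness) that the paper leaves implicit.
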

	\begin{proof}
		Just apply Lemma 4.1 to $$\xymatrix{X\ar[r]^e&R\ar[r]^-{\langle r_1,r_2\rangle}&X\times X}$$ using the fact that the diagonal $\langle r_1,r_2\rangle e=\langle1_X,1_X\rangle:X\to X\times X$ is a morphism of $T$-algebras.
	\end{proof}
	\begin{teo}
		Let $U:\mathscr{A}\to\mathscr{X}$ be a monadic functor with a Barr exact $\mathscr{A}$, a semi-abelian $\mathscr{X}$,  and the monad corresponding to $U$ being essentially nullary. Then, given an object $A$ in $\mathscr{A}$, the following (possibly large) lattices associated to it are canonically isomorphic to each other:
		\begin{itemize}
			\item [(a)] $\mathrm{Quot}(A)$, consisting of quotient objects of $A$;
			\item [(b)] $\mathrm{ERel}(A)$, consisting of equivalence relations on $A$;
			\item [(c)] $\mathrm{RRel}(A)$, consisting of reflexive relations on $A$;
			\item [(d)] $\mathrm{RRel}(U(A))$, consisting of reflexive relations on $U(A)$;
			\item [(e)] $\mathrm{RRel}(U(A))$, consisting of equivalence relations on $U(A)$;
			\item [(f)] $\mathrm{Quot}(U(A))$, consisting of quotient objects of $U(A)$;
			\item [(g)] $\mathrm{NSub}(U(A))$, consisting of normal subobjects of $U(A)$.
		\end{itemize} 
		In particular, this is the case for any ideally exact $\mathscr{A}$, with $U$ chosen as in Theorem 3.3(b). 
	\end{teo}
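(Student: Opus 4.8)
The plan is to string the seven lattices into a single chain of canonical isomorphisms whose every link, save one, is a routine consequence of known exact-category facts, the exceptional link carrying the essentially-nullary content through Corollary 4.2. First I would observe that $\mathscr{A}$ is not merely Barr exact but also Bourn protomodular: since $U$ is monadic and $\mathscr{X}$ is semi-abelian, protomodularity is reflected exactly as in the proof of Theorem 3.1(b)$\Rightarrow$(a). Thus $\mathscr{A}$ and $\mathscr{X}$ are both Barr exact and Mal'cev. In any such category every reflexive relation is automatically an equivalence relation, so $\mathrm{ERel}(-)\hookrightarrow\mathrm{RRel}(-)$ is an isomorphism; and by Barr exactness, sending an equivalence relation to its coequalizer (with inverse sending a regular epimorphism to its kernel pair) is an isomorphism $\mathrm{ERel}(-)\cong\mathrm{Quot}(-)$ of the associated lattices. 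Applying this to $\mathscr{A}$ yields (a)$\cong$(b)$\cong$(c), and applying it to $\mathscr{X}$ yields (d)$\cong$(e)$\cong$(f); and since $\mathscr{X}$ is moreover pointed, the standard semi-abelian correspondence between quotients and normal subobjects — a quotient to its kernel, a normal subobject to its cokernel — gives (f)$\cong$(g).

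The single nonstandard link is (c)$\cong$(d), and this is where I expect the actual work to lie. Identifying $\mathscr{A}$ with $\mathscr{X}^T$ and $A$ with a $T$-algebra $(U(A),h)$, I would define the forward map $\mathrm{RRel}(A)\to\mathrm{RRel}(U(A))$ by $U$: being monadic it preserves finite limits, and being faithful it reflects monomorphisms, so it sends a reflexive relation $R\rightarrowtail A\times A$ to the reflexive relation $U(R)\rightarrowtail U(A)\times U(A)$, monotonically. For the inverse I would appeal to Corollary 4.2: a reflexive relation $S$ on the underlying object $U(A)$ of the $T$-algebra $(U(A),h)$ carries a unique $T$-algebra structure making $r_1$, $r_2$, $e$ into $T$-homomorphisms, so $S$ lifts to a reflexive relation on $A$ in $\mathscr{X}^T=\mathscr{A}$; the pairing $\langle r_1,r_2\rangle$ is then a $T$-morphism and a monomorphism in $\mathscr{A}$ (as $U$ reflects monomorphisms), and reflexivity transfers since $U$ is faithful. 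The uniqueness clause of Corollary 4.2 makes the two assignments mutually inverse, and since each respects inclusion of subobjects the bijection is a lattice isomorphism.

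Concatenating the links yields $\mathrm{Quot}(A)\cong\mathrm{ERel}(A)\cong\mathrm{RRel}(A)\cong\mathrm{RRel}(U(A))\cong\mathrm{ERel}(U(A))\cong\mathrm{Quot}(U(A))\cong\mathrm{NSub}(U(A))$, which is the assertion. The main obstacle is entirely confined to the step (c)$\cong$(d): one must check that the $T$-algebra structure produced by Corollary 4.2 really organizes $S$ into a subobject of $A\times A$ in $\mathscr{A}$ and that the two directions are mutually inverse order-isomorphisms; all the remaining links are bookkeeping over established properties of Barr exact, Mal'cev and semi-abelian categories. For the final ``in particular'' clause, an ideally exact $\mathscr{A}$ is Barr exact by definition, and Theorem 3.3(b) furnishes a monadic $U:\mathscr{A}\to\mathscr{X}$ with $\mathscr{X}$ semi-abelian and the associated monad essentially nullary, so the hypotheses of the theorem are met.
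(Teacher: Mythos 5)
Your proposal is correct and follows essentially the same route as the paper: the paper's own proof consists precisely of the observation that $\mathrm{RRel}(A)\approx\mathrm{RRel}(U(A))$ follows from Corollary 4.2, while the identifications among (a)--(c) and among (d)--(g) follow from Barr exactness and Bourn protomodularity (the latter reflected along the monadic $U$, as you note). Your write-up merely makes explicit the individual links that the paper leaves as standard facts.
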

	\begin{proof}
		We have $\mathrm{RRel}(A)\approx\mathrm{RRel}(U(A))$ by Corollary 4.2, while the isomorphism of the first three and the last four lattices follows from Barr exactness and Bourn protomodularity.
	\end{proof}
	\begin{defi}
		\emph{In the situation of Theorem 4.3, given an object $A$ in $\mathscr{A}$, normal monomorphisms in $\mathscr{A}$ with codomain $U(A)$ will be called} $U$-ideals \emph{of $A$. In particular, if $U:\mathscr{A}\to(\mathscr{A}\downarrow0)$ is the pullback functor along $0\to1$, then $U$-ideals of $A$ will simply be called} ideals \emph{of $A$.}
	\end{defi}
	
	\begin{remark}
		\emph{(a) As follows from the isomorphism $\mathrm{Quot}(A)\approx\mathrm{Quot}(U(A))$, when $U$ satisfies the requirements of (Theorem 4.3 and) Theorem 3.10 of \cite{[LMS2022]}, $U$-ideals in the sense of Definition 4.4 are the same as $U$-ideals in the sense of \cite{[LMS2022]}.\\
		\indent (b) Ideals in the sense of Definition 4.4 are closely related to normal subalgebras in the sense of \cite{[U2013]}.}
	\end{remark}
	
	\section{A remark on action representability}
	
    For objects $B$ and $X$ in a semi-abelian category $\mathscr{X}$ we have a bijection $$\mathrm{SplitExt}(B,X)\approx\mathrm{Act}(B,X)$$ between the set of isomorphism classes of split extensions of $B$ with the kernel $X$ and the set of $B$-actions on $X$, which is natural in $B$ \cite{[BJK2005]} (see also \cite{[BJ1998]} and \cite{[BJK2005a]}). The resulting isomorphic functors $\mathscr{X}^{\mathrm{op}}\to\mathsf{Sets}$ might be representable, in which case the category $\mathscr{X}$ is called action representable. For example it the case for $\mathscr{X}=\mathsf{Groups}$ and for $\mathscr{X}$ being the category of Lie algebras over a commutative ring. If $\mathscr{X}$ is either the category of rings or of commutative rings, those functors are representable, as shown in \cite{[BJK2005a]}, for some $X$, and, in particular, when $X$ is unital. This might suggest to re-define action representability as follows:
    
    Given an ideally exact category $\mathscr{A}$, take $U:\mathscr{A}\to(\mathscr{A}\downarrow0)$ to be the pullback functor along $0\to1$, and call $\mathscr{A}$ \emph{action representable}, if the functor $$\mathrm{SplitExt}(-,U(A))\approx\mathrm{Act}(-,U(A)):(\mathscr{A}\downarrow0)^{\mathrm{op}}\to\mathsf{Sets}$$ is representable for each object $A$ in $\mathscr{A}$.
    
    In particular, this will make both the category of unital rings and the category of unital commutative rings action representable in addition to all semi-abelian action representable categories.
    
    One can also similarly re-define the notions of \emph{action accessible} \cite{[BJ2009]} and of \emph{weakly action representable} \cite{[J2022]}.

	{}
	
\end{document}